% ------------------------------------------------------------------------
% ***************************** S.A.Seyed Fakhari *****************************
% ------------------------------------------------------------------------
% ******* This is a journal template file for use with AMS-LaTeX. ********
% ------------------------------------------------------------------------

\documentclass[12pt]{amsart}

\usepackage{amsmath}
\usepackage{amssymb}
\usepackage{amsfonts}
\usepackage{amsthm}
\usepackage{enumerate}
\usepackage{hyperref}
\usepackage{color}
%\usepackage{psfrag}
%\usepackage[all]{xy}

% Setup Environments -----------------------------------------------------
\textheight=600pt
\textwidth=435pt
\oddsidemargin=17pt
\evensidemargin=17pt

% Theorem Environments ---------------------------------------------------
\theoremstyle{plain}
\newtheorem{thm}{Theorem}[section]

\newtheorem{prop}[thm]{Proposition}
\newtheorem{lem}[thm]{Lemma}
\newtheorem{cor}[thm]{Corollary}

\newtheorem{conj}[thm]{Conjecture}

\theoremstyle{definition}

\newtheorem{exmp}[thm]{Example}

\newtheorem{rem}[thm]{Remark}

\newtheorem{dfns-rems}[thm]{Definitions and Remarks}
\newtheorem{notas-rems}[thm]{Notations and Remarks}
\newtheorem{exmps-rems}[thm]{Examples and Remarks}

\DeclareMathOperator{\Ass}{Ass}
\DeclareMathOperator{\height}{height}
\DeclareMathOperator{\depth}{depth}
\DeclareMathOperator{\Ext}{Ext}

%%%%%%%%%%%%%%%%%%%%%%%%%%%%%%%%%%%%%%%%%%%%%%%%%%%%%%%%%%%%%%%%%%%%%%%%%%

\begin{document}

% ------------------------------------------------------------------------

\title[weighted edge ideal]{Cohen-Macaulay edge-weighted edge ideals of very well-covered graphs}

% ------------------------------------------------------------------------

\author[Seyed Amin Seyed Fakhari]{Seyed Amin Seyed Fakhari}

\address{Seyed Amin Seyed Fakhari, School of Mathematics, Statistics and Computer Science,
College of Science, University of Tehran, Tehran, Iran.}

\email{aminfakhari@ut.ac.ir}

\author[Kosuke Shibata]{Kosuke Shibata}

\address{Kosuke Shibata, Department of Mathematics, Faculty of Science, Okayama University, Kita-ku Okayama 700--8530, Japan.}

\email{pfel97d6@okayama-u.ac.jp}

\author[Naoki Terai]{Naoki Terai}

\address{Naoki Terai, Department of Mathematics, Faculty of Science, Okayama University, Kita-ku Okayama 700--8530, Japan.}

\email{terai@okayama-u.ac.jp}

\author[Siamak Yassemi]{Siamak Yassemi}

\address{Siamak Yassemi, School of Mathematics, Statistics and Computer Science,
College of Science, University of Tehran, Tehran, Iran.}

\email{yassemi@ut.ac.ir}

% ------------------------------------------------------------------------

\begin{abstract}
We characterize unmixed and Cohen-Macaulay edge-weighted edge ideals of very well-covered graphs. We also provide examples of oriented graphs which have unmixed and non-Cohen-Macaulay vertex-weighted edge ideals, while the edge ideal of their underlying graph is Cohen-Macaulay. This disproves a conjecture posed by Pitones, Reyes and Toledo.
\end{abstract}

% ------------------------------------------------------------------------

\subjclass[2000]{Primary 05C75, Secondary 05C90, 13H10, 55U10}

% ------------------------------------------------------------------------

\keywords{Cohen-Macaulay graph, Edge-weighted graph, Unmixed ideal, Weighted edge ideal}

% ------------------------------------------------------------------------

\thanks{}

% ------------------------------------------------------------------------

\maketitle

%%%%%%%%%%%%%%%%%%%%%%%%%%%%%%%%%%%%%%%%%%%%%%%%%%%%%%%%%%%%%%%%%%%%%%%%%%

\section{Introduction} \label{sec1}

In this article, a graph means a simple graph without loops,
multiple edges, and isolated vertices. Let $G$ be a graph with the vertex set $V(G) = \{x_1,
\dots , x_n\}$ and with the edge set $E(G)$.
Suppose $w : E(G)\longrightarrow \mathbb{Z}_{>0}$ is an edge weight on $G$.
We write $G_w$ for the pair $(G,w)$ and call it an edge-weighted graph.
Let $S=\mathbb{K}[x_1, \ldots, x_n]$ be the polynomial ring in $n$ variables over a field $\mathbb{K}$.
The \textit{(edge-weighted) edge ideal} of an edge-weighted graph $G_w$ was introduced in \cite{PaSW} and it is defined as$$I(G_w)=\big((x_ix_j)^{w(x_ix_j)} \, |\,  x_ix_j\in E(G)\big),$$(by abusing the notation, we identify the edges of $G$ with quadratic squarefree monomials of $S$). Paulsen and Sather-Wagstaff \cite{PaSW} studied the primary decomposition of these ideals. They also investigated unmixedness and Cohen-Macaulayness of these ideals, in the case that $G$ is a cycle, tree or a complete graph. The aim of this paper is to continue this study. In Section \ref{sec3}, we characterize unmixed and Cohen-Macaulay properties of the edge-weighted edge ideals of very well-covered graphs (see Section \ref{sec2} for the definition of very well-covered graphs). Our results can be seen as generalizations of the results concerning the Cohen-Macaulay property of usual edge ideals of very well-covered  graphs (see e.g., \cite{CHHKTT, CRT, CV, HH}). For other aspects of ring-theoretic study for very well-covered graphs, see e.g., \cite{BM, KTY, MMCRTY, SF}.

Another kind of generalization of edge ideals is considered in \cite{HLMRV, PRT, PRV}. Indeed, Pitones, Reyes and Toledo \cite{PRT} introduced the \textit{vertex-weighted} edge ideal of an oriented graph as follows. Let $\mathcal{D}=(V(\mathcal{D}),E(\mathcal{D}))$ be an oriented graph with $V(\mathcal{D})=\{x_1,\ldots , x_n\}$, and let $w : V(\mathcal{D})\longrightarrow \mathbb{Z}_{>0}$ be a \textit{vertex-weighted} on $\mathcal{D}$. Set $w_j:=w(x_j)$. The \textit{vertex-weighted edge ideal} of $\mathcal{D}$ is defined as
$$I(\mathcal{D})=\big(x_ix_j^{\omega_j} \, |\,  (x_i,x_j\in E(\mathcal{D})\big).$$Pitones, Reyes and Toledo proposed the following conjecture.

\begin{conj} \label{ConjHLMRV}\cite[Conjecture 53]{PRT}
Let $\mathcal{D}$ be a vertex-weighted oriented graph and let $G$ be its underlying graph. If $I(\mathcal{D})$ is unmixed and $S/I(G)$ is Cohen-Macaulay, then $S/I(\mathcal{D})$ is Cohen-Macaulay.
\end{conj}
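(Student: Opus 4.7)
The abstract signals that the authors disprove Conjecture \ref{ConjHLMRV}, so my plan is to exhibit an explicit counterexample: a vertex-weighted oriented graph $\mathcal{D}$ whose underlying simple graph $G$ has $S/I(G)$ Cohen-Macaulay and whose vertex-weighted edge ideal $I(\mathcal{D})$ is unmixed, yet $S/I(\mathcal{D})$ fails to be Cohen-Macaulay. A single instance suffices to settle the problem in the negative.

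I would begin by restricting the search to small graphs $G$ for which $S/I(G)$ is known to be Cohen-Macaulay, most naturally complete graphs $K_n$ and small very well-covered graphs (where Section \ref{sec3} of this paper already supplies a fine-grained understanding of how weights affect the Cohen-Macaulay locus), and for each candidate enumerate the possible orientations. Since $\sqrt{I(\mathcal{D})} = I(G)$ for every orientation and every weight function, we automatically have $\dim S/I(\mathcal{D}) = \dim S/I(G)$, so any orientation-and-weight choice that drops $\depth S/I(\mathcal{D})$ below this common value, while still satisfying unmixedness, kills the conjecture.

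To enforce unmixedness I would invoke the primary decomposition of $I(\mathcal{D})$ developed by Pitones, Reyes and Toledo: associated primes correspond to vertex covers of $G$ together with extra exponent data coming from the orientations and from which weights $w_j$ exceed $1$. This translates unmixedness into a combinatorial condition on the placement of the weights $w_j>1$ relative to the orientation at sinks and sources. I would parametrize the weight assignments meeting this condition and then, within that family, scan for a member whose quotient $S/I(\mathcal{D})$ is not Cohen-Macaulay.

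The main obstacle is this last test: because $I(\mathcal{D})$ is not squarefree, Reisner's criterion is unavailable, and a direct verification of non--Cohen-Macaulayness requires either exhibiting a non-vanishing $\Ext_S^i(S/I(\mathcal{D}), S)$ with $i > \height I(\mathcal{D})$, equivalently a gap between $\depth$ and $\dim$, or displaying a nonzero socle element in a quotient by a system of parameters. In practice I would expect the smallest working example to emerge from a computer-algebra experiment on a fixed small oriented $G$ with a one-parameter family of weights, and the remaining step would be to recast that computation as a hand-checkable argument, for example by producing the offending local cohomology class explicitly or by a short induction on the weights.
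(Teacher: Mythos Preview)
Your overall plan---exhibit a single vertex-weighted oriented graph $\mathcal{D}$ with $I(\mathcal{D})$ unmixed, $S/I(G)$ Cohen--Macaulay, and $S/I(\mathcal{D})$ not Cohen--Macaulay---is exactly what the paper does. The genuine gap is in your choice of search space: neither of the two families you name can produce a counterexample.

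For $G=K_n$ the edge ideal has height $n-1$, so $\dim S/I(\mathcal{D})=\dim S/I(K_n)=1$ for every orientation and every weight. If $I(\mathcal{D})$ is unmixed then every associated prime has height $n-1$; in particular the maximal ideal is not associated, hence $\depth S/I(\mathcal{D})\ge 1=\dim S/I(\mathcal{D})$ and the ring is automatically Cohen--Macaulay. So complete graphs are useless here. Very well-covered graphs are equally hopeless: this is precisely the class for which the conjecture is known to hold in the vertex-weighted setting (Pitones--Reyes--Villarreal \cite{PRV}), and Section~\ref{sec3} of the present paper establishes the parallel edge-weighted statement. Your search would therefore terminate without finding anything.

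The paper's counterexample lives elsewhere. It takes $G$ to be the $11$-vertex graph arising from the minimal triangulation of the real projective plane---the standard example of a Stanley--Reisner ring whose Cohen--Macaulayness is characteristic-sensitive, hence already sitting at the edge of the Cohen--Macaulay locus. Over a field of characteristic zero $S/I(G)$ is Cohen--Macaulay of dimension $3$. The authors then specify an explicit orientation of all $25$ edges and two weight functions, each raising a single vertex weight from $1$ to $2$; a \textit{Macaulay2} computation confirms that the resulting ideals $I(\mathcal{D}_1)$ and $I(\mathcal{D}_2)$ are unmixed with $\depth S/I(\mathcal{D}_i)=2<3$. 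They further distinguish the two examples by checking that one satisfies Serre's $(S_2)$ and the other does not. There is no attempt to replace the computer verification by a hand argument; the paper is content with the \textit{Macaulay2} certificate, so your final step of ``recasting the computation'' goes beyond what is actually done.
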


In Section \ref{sec4}, we provide counterexamples for this conjecture.

We close this introduction by mentioning that unmixed and Cohen-Macaulay properties of vertex-weighted edge ideals of vertex-weighted oriented very well-covered  graphs are studied by Pitones, Reyes and Villarreal \cite{PRV}.

%%%%%%%%%%%%%%%%%%%%%%%%%%%%%%%%%%%%%%%%%%%%%%%%%%%%%%%%%%%%%%%%%%%%%%%%%%

\section{Preliminaries} \label{sec2}

In this section, we provide the definitions and basic facts which will be used in the next sections. We refer to \cite{Die} and \cite{Vi1} for detailed information.

Let $G$ be a graph with the vertex set $V(G) = \{x_1\dots, x_n\}$ and with the edge set $E(G)$. For every integer $1\leq i\leq n$, the {\it degree} of $x_i$, denoted by $\deg_Gx_i$ , is the number of edges of $G$ which are incident to $x_i$. For $F \subset E(G)$ we denote $(V(G), E(G) \setminus F)$ by $G-F$. For a family $F$ of 2-element subsets of $V(G)$ the graph $(V(G), E(G) \cup F)$ is denoted by $G+F$. A subset $C \subset V(G)$ is a \textit{vertex cover} of $G$ if
every edge of $G$ is incident with at least one vertex in $C$. A vertex cover $C$ of $G$ is called \textit{minimal} if there
is no proper subset of $C$ which is a vertex cover of $G$. A subset $A$ of $V(G)$ is called an \textit{independent set} of $G$
if no two vertices of $A$ are adjacent. An independent set $A$ of $G$ is \textit{maximal} if there exists no independent set
which properly includes $A$. Observe that $C$ is a minimal vertex cover of $G$ if and only if $V(G)\setminus C$
is a maximal independent set of $G$. A subset $M\subseteq E(G)$ is a {\it matching} if $e\cap e'=\emptyset$, for every pair of edges $e, e'\in M$. If every vertex of $G$ is incident to an edge in $M$, then $M$ is a {\it perfect matching} of $G$. A graph $G$ without isolated vertices is said to be {\it very well-covered} if $|V(G)|$ is an even integer and every maximal independent subset of $G$ has cardinality $|V(G)|/2$.

A graph $G$ is called \textit{Cohen-Macaulay} if $S/I(G)$ is a Cohen-Macaulay
ring. Here, $I(G)$ is the {\it edge ideal} of $G$, which is defined as$$I(G)=\big(x_ix_j \, |\,  x_ix_j\in E(G)\big).$$
An ideal $I\subset S$ is \textit{unmixed} if the associated primes of $S/I$ have the same height. It is well known that $I$ is unmixed if $S/I$ is a Cohen-Macaulay ring. A graph $G$ is called \textit{unmixed} if the minimal vertex covers of $G$ have the same size. It can be easy seen that $G$ is an unmixed graph
if and only if $I(G)$ is an unmixed ideal. Also, note that $\height I(G)$ is equal to the smallest size of vertex covers of $G$.

We introduce polarization according to \cite{SV}. Let $I$ be a monomial ideal of
$S=\mathbb{K}[x_1,\ldots,x_n]$ with minimal generators $u_1,\ldots,u_m$,
where $u_j=\prod_{i=1}^{n}x_i^{a_{i,j}}$, $1\leq j\leq m$. For every $i$
with $1\leq i\leq n$, let $a_i=\max\{a_{i,j}\mid 1\leq j\leq m\}$, and
suppose that $$T=\mathbb{K}[x_{11},x_{12},\ldots,x_{1a_1},x_{21},
x_{22},\ldots,x_{2a_2},\ldots,x_{n1},x_{n2},\ldots,x_{na_n}]$$ is a
polynomial ring over the field $\mathbb{K}$. Let $I^{{\rm pol}}$ be the squarefree
monomial ideal of $T$ with minimal generators $u_1^{{\rm pol}},\ldots,u_m^{{\rm pol}}$, where
$u_j^{{\rm pol}}=\prod_{i=1}^{n}\prod_{k=1}^{a_{i,j}}x_{ik}$, $1\leq j\leq m$. The
monomial $u_j^{{\rm pol}}$ is called the {\it polarization} of $u_j$, and the ideal $I^{{\rm pol}}$
is called the {\it polarization} of $I$. It is well known that polarization preserves the height of ideal. Moreover, $I$ is an unmixed ideal if and only if $I^{{\rm pol}}$ is an unmixed ideal.

Finally, we recall the concept of Serre's condition.
Let $I$ be a monomial ideal of $S$.
For a positive integer $k$, the ring $S/I$ satisfies the Serre's condition $(S_k)$
if$$\depth (S/I)_{\frak{p}} \ge \min\{\dim (S/I)_{\frak{p}},\, k \}$$for every $\frak{p} \in {\rm Spec}(S/I)$.

\begin{lem} \label{Serre}\cite[Lemma 3.2.1]{Sch}
The following two conditions are equivalent.
\begin{enumerate}
\item $S/I$ satisfies the Serre's condition $(S_k)$.
\item For every integer $i$ with $0\leq i <\dim S/I$, the inequality$$\dim \Ext_S ^{n-i}(S/I, S) \le i-k$$holds, where the dimension of zero module is defined to be $-\infty$.
\end{enumerate}
\end{lem}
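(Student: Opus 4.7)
The plan is to reduce both conditions to prime-local statements and link them via local duality. Since $S$ is a polynomial ring, the formation of $\Ext$ commutes with localization and each $S_{\mathfrak{p}}$ is regular local of dimension $\height \mathfrak{p}$. First, I would invoke local duality over $S_{\mathfrak{p}}$ to obtain, for every integer $e$, the Matlis-dual isomorphisms
$$H^{e}_{\mathfrak{p}S_{\mathfrak{p}}}\bigl((S/I)_{\mathfrak{p}}\bigr) \;\cong\; \Ext^{\height\mathfrak{p}-e}_{S_{\mathfrak{p}}}\bigl((S/I)_{\mathfrak{p}}, S_{\mathfrak{p}}\bigr)^{\vee} \;\cong\; \Ext^{\height\mathfrak{p}-e}_{S}(S/I, S)_{\mathfrak{p}}^{\vee}.$$
This dictionary identifies the nonvanishing degrees of local cohomology of $(S/I)_{\mathfrak{p}}$ — and therefore $\depth(S/I)_{\mathfrak{p}}$ and $\dim(S/I)_{\mathfrak{p}}$ — with the nonvanishing degrees of the $\Ext$ modules at $\mathfrak{p}$.

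Next, I would rewrite condition (2) prime-by-prime. Using the polynomial-ring identity $\height \mathfrak{p} + \dim S/\mathfrak{p} = n$, the estimate $\dim \Ext^{n-i}_{S}(S/I, S) \leq i - k$ is equivalent to the assertion that every prime $\mathfrak{p}$ in $\mathrm{Supp}\,\Ext^{n-i}_{S}(S/I, S)$ satisfies $\dim S/\mathfrak{p} \leq i - k$. Setting $e = i - \dim S/\mathfrak{p}$, the dictionary turns non-vanishing of $\Ext^{n-i}$ at $\mathfrak{p}$ into non-vanishing of $H^{e}_{\mathfrak{p}S_{\mathfrak{p}}}((S/I)_{\mathfrak{p}})$, so condition (2) becomes the statement that whenever $H^{e}_{\mathfrak{p}S_{\mathfrak{p}}}((S/I)_{\mathfrak{p}}) \neq 0$ and $0 \leq e + \dim S/\mathfrak{p} < \dim S/I$, one has $e \geq k$.

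For $(1)\Rightarrow(2)$, I would take such a pair $(\mathfrak{p}, e)$, deduce $e \geq \depth(S/I)_{\mathfrak{p}} \geq \min\{\dim(S/I)_{\mathfrak{p}}, k\}$ from $(S_{k})$, and use the catenary inequality $\dim(S/I)_{\mathfrak{p}} + \dim S/\mathfrak{p} \leq \dim S/I$ together with the strict bound $e + \dim S/\mathfrak{p} < \dim S/I$ to exclude the case $\min = \dim(S/I)_{\mathfrak{p}} < k$, in which $(S/I)_{\mathfrak{p}}$ is Cohen-Macaulay and forces $e = \dim(S/I)_{\mathfrak{p}}$. For $(2)\Rightarrow(1)$, given a prime $\mathfrak{p}$ violating $(S_{k})$ I would pick the smallest $e < \min\{\dim(S/I)_{\mathfrak{p}}, k\}$ with $H^{e} \neq 0$, set $i = e + \dim S/\mathfrak{p}$, and verify $i \in [0, \dim S/I)$, thereby exhibiting a prime in $\mathrm{Supp}\,\Ext^{n-i}_{S}(S/I, S)$ whose coheight exceeds $i - k$ and violates (2).

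The main obstacle will be the dimension bookkeeping required to keep the index $i$ inside the window $0 \leq i < \dim S/I$ in both directions of the argument. This rests on the catenary property of the polynomial ring and the precise interplay between $\dim(S/I)_{\mathfrak{p}}$, $\dim S/\mathfrak{p}$ and $\dim S/I$; it is the point at which care is required so that primes at which $(S/I)_{\mathfrak{p}}$ happens to be Cohen-Macaulay of lower-than-expected dimension are correctly handled and do not produce spurious witnesses on either side of the equivalence.
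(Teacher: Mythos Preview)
The paper does not supply its own proof of this lemma; it is quoted from Schenzel's monograph and used as a black box in Section~\ref{sec4}. Your approach via local duality and the support description of the $\Ext$ modules is the standard one, and your outline of $(2)\Rightarrow(1)$ is correct: if $e=\depth(S/I)_{\mathfrak p}<\min\{\dim(S/I)_{\mathfrak p},k\}$, then $i=e+\dim S/\mathfrak p$ satisfies $0\le i<\dim(S/I)_{\mathfrak p}+\dim S/\mathfrak p\le\dim S/I$ by catenarity, and $\mathfrak p$ witnesses $\dim\Ext_S^{n-i}(S/I,S)>i-k$.

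There is, however, a genuine gap in your treatment of $(1)\Rightarrow(2)$. In the case where $(S/I)_{\mathfrak p}$ is Cohen--Macaulay with $e=\dim(S/I)_{\mathfrak p}<k$, you want to force $i=e+\dim S/\mathfrak p\ge\dim S/I$ so that this $i$ falls outside the range $0\le i<\dim S/I$. For that you need $\dim(S/I)_{\mathfrak p}+\dim S/\mathfrak p\ge\dim S/I$; the catenary inequality you quote points the other way, $\dim(S/I)_{\mathfrak p}+\dim S/\mathfrak p\le\dim S/I$, and so cannot furnish the exclusion. The reversed inequality is precisely equidimensionality of $S/I$, and without it the implication $(1)\Rightarrow(2)$ is actually false: for $S=\mathbb{K}[x_1,x_2,x_3]$ and $I=(x_1x_2,x_1x_3)$ the ring $S/I$ has no embedded primes, hence satisfies $(S_1)$, yet $\Ext_S^{2}(S/I,S)\cong S/(x_2,x_3)$ has Krull dimension $1>0=i-k$ at $i=k=1$. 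Schenzel's original formulation carries an equidimensionality hypothesis, and in the paper the lemma is only applied to unmixed ideals, where this is automatic; but your argument as written does not close the gap, and cannot be made to without that extra hypothesis.
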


%%%%%%%%%%%%%%%%%%%%%%%%%%%%%%%%%%%%%%%%%%%%%%%%%%%%%%%%%%%%%%%%%%%%%%%%%%

\section{Edge-weighted edge ideal of very well-covered graphs} \label{sec3}

In this section, we study the unmixed and Cohen-Macaulay properties of edge-weighted edge ideal of very well-covered graphs. We first recall some known facts about the structure of very well-covered graphs.

\begin{lem} \label{matching}\cite{GV2}
Let $G$ be a very well-covered graph. Then $G$ has a perfect matching.
\end{lem}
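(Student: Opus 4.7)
The plan is to combine Gallai's identity $\alpha(G) + \tau(G) = |V(G)|$ with the defining property of very well-covered graphs, and then reduce the existence of a perfect matching to Hall's marriage theorem on an auxiliary bipartite graph.

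Write $|V(G)| = 2n$. By Gallai's identity and very-well-coveredness, $\alpha(G) = \tau(G) = n$; the goal is to show $\nu(G) = n$. I would fix a maximum independent set $I$ and set $C := V(G) \setminus I$, so that $|I| = |C| = n$ and $C$ is a minimum vertex cover. Let $H$ be the bipartite graph on $I \sqcup C$ whose edges are those of $G$ joining $I$ to $C$. Since $I$ is independent, any perfect matching of $G$ must match each vertex of $I$ to a vertex of $C$; comparing sizes then forces the matching to lie entirely inside $H$. It therefore suffices to find a perfect matching of $H$, which by Hall's theorem reduces to verifying $|N_H(S)| \geq |S|$ for every $S \subseteq C$.

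I would argue Hall's condition by contradiction. Suppose $S \subseteq C$ satisfies $|T| < |S|$, where $T := N_H(S)$. Let $S_0$ be a maximum independent subset of $G[S]$ and consider $J := (I \setminus T) \cup S_0$. Since $N_G(S_0) \cap I \subseteq N_G(S) \cap I = T$, there are no edges between the two pieces of $J$; each piece is independent, hence so is $J$. Thus $|J| = n - |T| + |S_0| \leq \alpha(G) = n$, forcing $|S_0| \leq |T|$, and in particular $G[S]$ must contain edges.

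The main obstacle is converting the soft bound $|S_0| \leq |T|$ into the required strict contradiction, and this is where the full strength of very-well-coveredness --- namely that \emph{every} maximal independent set of $G$ has size exactly $n$, not merely that $\alpha(G) = n$ --- must be invoked. I would pick $S$ extremally among Hall-violators (for instance, minimizing $|S|$), extend $J$ to a maximal independent set $J'$ of size $n$, and analyse the $|T| - |S_0|$ added vertices: by maximality of $S_0$ inside $G[S]$ these lie in $T \cup (C \setminus S)$, and by tracking which members of $T$ can be reintroduced and invoking the extremality of $S$, one expects to produce a maximal independent set of $G$ of size strictly larger than $n$, which is the desired contradiction. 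A cleaner but less elementary alternative is to invoke the Gallai--Edmonds decomposition $V(G) = D \cup A \cup R$: the absence of a perfect matching forces $c(D) > |A|$, and combining a maximum independent set in each factor-critical component of $G[D]$ with a maximum independent set of the perfect-matching piece $G[R]$ produces an independent set of $G$ exceeding $n$ in size, again contradicting $\alpha(G) = n$.
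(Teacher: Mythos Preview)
The paper does not prove this lemma; it is simply quoted from \cite{GV2}, so there is no argument in the paper to compare against. Your proposal, however, does not amount to a complete proof.

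In the Hall approach you set up the bipartite graph $H$ correctly and obtain the bound $|S_0|\le |T|$, but you explicitly leave the decisive step at the level of ``one expects'': choosing $S$ minimal and extending $J$ to a maximal $J'$ is a reasonable plan, yet you never actually exhibit the promised independent set of size exceeding $n$, nor explain how the extremality of $S$ forces this. The argument stops exactly where the real work begins.

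The Gallai--Edmonds alternative contains a concrete error. A factor-critical component $K$ with $|K|=2k+1\ge 3$ satisfies $\alpha(K)\le k$: for an alleged independent set $J$ of size $k+1$, pick any $w\in V(K)\setminus J$; the perfect matching of $K-w$ has only $k$ edges, too few to cover the $k+1$ vertices of $J\subseteq K-w$. Together with $\alpha(G[R])\le |R|/2$, your union of independent sets has size at most $\tfrac12\bigl(|D|+|R|+c_1-c_{\ge 3}\bigr)$, where $c_1$ and $c_{\ge 3}$ count the singleton and the larger components of $G[D]$. This exceeds $n=\tfrac12(|D|+|A|+|R|)$ only when $c_1-c_{\ge 3}>|A|$, which does not follow from the deficiency condition $c(D)=c_1+c_{\ge 3}>|A|$ as soon as some component of $G[D]$ has three or more vertices. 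Thus the constructed set need not exceed $n$, and no contradiction with $\alpha(G)=n$ results.
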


By the above lemma, we may assume that the vertices of the very well-covered graph $G$ are labeled such that the following condition is satisfied.

\medskip

(*)
$V(G)=X \cup Y$, $X \cap Y=\emptyset$,
where $X=\{x_1,\ldots,x_h\}$ is a minimal vertex cover of $G$
and $Y=\{y_1,\ldots, y_h\}$ is a maximal independent set of $G$
such that  $\{x_1 y_1, \ldots, x_hy_h\} \subset E(G)$.

Following the notations of condition (*), for the rest of this section, we set $S=K[x_1,\ldots,x_h, y_1,\ldots, y_h]$. For later use, we recall the following  characterization of very well-covered graphs.

\begin{prop}{\cite{CRT, MoReVi}}\label{th:unmix_ijk}
Let $G$ be a graph with $2h$ vertices, which are not isolated. Assume that the vertices of $G$ are labeled such that the condition (*) is satisfied. Then $G$ is
very well-covered if and only if the following hold.
\begin{enumerate}
\item[(i)] If $z_i x_j$, $y_j x_k\in E(G)$, then $z_i x_k\in E(G)$
for distinct indices $i$, $j$ and $k$ and for $z_i\in \{ x_i, y_i\}$.
\item[(ii)] If $x_iy_j \in E(G)$, then $x_ix_j \notin E(G)$.
\end{enumerate}
\end{prop}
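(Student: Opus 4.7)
The plan is to handle the two implications separately, exploiting the perfect matching $M=\{x_iy_i:1\le i\le h\}\subset E(G)$ built into condition (*).

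For $(\Leftarrow)$, I would let $A$ be an arbitrary maximal independent set of $G$ and show $|A|=h$. Since each $x_iy_i$ is an edge, $|A\cap\{x_i,y_i\}|\le 1$, so $|A|\le h$. For the reverse inequality, assume for contradiction that $A\cap\{x_i,y_i\}=\emptyset$ for some $i$. By maximality there exist $a,b\in A$ with $ax_i,by_i\in E(G)$. Since $Y$ is independent and $y_i\notin A$, the vertex $b$ must lie in $X$, say $b=x_j$ with $j\ne i$. The proof then splits into four subcases: if $a=b=x_j$, the edges $x_jx_i$ and $x_jy_i$ violate (ii); if $a=y_j$, then $a$ and $b$ are the two endpoints of the matching edge $x_jy_j$, contradicting the independence of $A$; and if $a=z_k$ with $z\in\{x,y\}$ and $k\notin\{i,j\}$, then the edges $z_kx_i$ and $y_ix_j$ together with condition (i) (middle index $i$, distinct triple $k,i,j$) force $z_kx_j\in E(G)$, again contradicting the independence of $A$.

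For $(\Rightarrow)$, I would assume $G$ is very well-covered, so that every maximal independent set has exactly $h$ elements, and then derive (i) and (ii) by exhibiting a too-small maximal independent set whenever either fails. If (ii) fails, pick $i\ne j$ with $x_iy_j,x_ix_j\in E(G)$ and extend $\{x_i\}$ to a maximal independent set $A$; both $x_j$ and $y_j$ are adjacent to $x_i$, so $A\cap\{x_j,y_j\}=\emptyset$ and $|A|\le h-1$, a contradiction. If (i) fails, pick distinct $i,j,k$ and $z_i\in\{x_i,y_i\}$ with $z_ix_j,y_jx_k\in E(G)$ but $z_ix_k\notin E(G)$; the pair $\{z_i,x_k\}$ is independent by the last non-edge, so it extends to a maximal independent set $A$, and then $x_j\notin A$ (adjacent to $z_i$) and $y_j\notin A$ (adjacent to $x_k$), so again $A\cap\{x_j,y_j\}=\emptyset$ and $|A|\le h-1$, a contradiction.

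The main obstacle I anticipate is the bookkeeping in the $(\Leftarrow)$ case analysis, ensuring that conditions (i) and (ii) are invoked in exactly the right subcases (in particular, that the subcase $a=b$ needs (ii), while the subcase with $a\ne b$ and $a\notin\{x_j,y_j\}$ needs (i)). By contrast, the $(\Rightarrow)$ direction is comparatively clean once one observes that a small ``witness'' independent set built from the hypothetical violating edges already forbids an entire matched pair $\{x_j,y_j\}$ from appearing in any maximal extension.
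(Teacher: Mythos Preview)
The paper does not give a proof of this proposition; it is simply quoted from \cite{CRT, MoReVi}. Your argument is a correct, self-contained combinatorial proof. In the $(\Leftarrow)$ direction your three subcases exhaust all possibilities for the neighbour $a$ of $x_i$ (since $a\notin\{x_i,y_i\}$ by assumption, either $a=b=x_j$, or $a=y_j$, or $a$ has index $k\notin\{i,j\}$), and conditions (ii), the matching edge $x_jy_j$, and (i) respectively yield the contradictions you describe. In the $(\Rightarrow)$ direction the witness sets $\{x_i\}$ and $\{z_i,x_k\}$ do extend to maximal independent sets missing the entire pair $\{x_j,y_j\}$, forcing size at most $h-1$ and contradicting very well-coveredness. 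So there is nothing to compare against in the paper itself, but your proof stands on its own.
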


We are now ready to state and prove the first main result of this paper, which characterizes edge-weighted very well-covered graphs with unmixed edge ideals.

\begin{thm} \label{th:unmix}
Let $G$ be a very well-covered graph with $2h$ vertices and let $w$ be an edge weight on $G$. Moreover, assume that the vertices of $G$ are labeled in such a way that the condition (*) is satisfied. Then $I(G_{w})$ is
unmixed if and only if the following hold.
\begin{enumerate}
\item[(i)] If $x_iz_j \in E(G)$, then $w(x_iz_j)\le w(x_iy_i)$ and  $w(x_iz_j) \le w(x_jy_j)$
for distinct indices $i,j$, and for any vertex $z_j\in \{x_j, y_j\}$.
\item[(ii)] If $z_ix_j$ and $y_j x_k$ are edges of $G$, then $w(z_ix_k)\le w(z_ix_j)$ and  $w(z_ix_k)\le w(y_jx_k)$
for distinct indices $i,j,k$ and for $z_i\in \{x_i, y_i\}$,  or
for distinct indices $j, i=k$ and for $z_i=y_i$.
\end{enumerate}
\end{thm}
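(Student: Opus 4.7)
I analyze unmixedness by passing to the polarization $J = I(G_w)^{\mathrm{pol}} \subset T$: by the facts recalled in Section~\ref{sec2}, $I(G_w)$ is unmixed iff $J$ is, and $J$ is squarefree, so its associated primes coincide with its minimal primes and correspond bijectively to the minimal vertex covers of the polarized hypergraph $H_w$ whose hyperedges are the supports of the polarized generators $u_e^{\mathrm{pol}}$ for $e\in E(G)$. In any such minimal cover $\sigma$, for each vertex $v\in V(G)$ at most one polarized copy $v_{\phi(v)}$ lies in $\sigma$, because a lower-indexed copy of $v$ dominates every higher-indexed copy (any polarized hyperedge containing $v_k$ also contains $v_{k'}$ for $k'<k$). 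Thus $\sigma$ is encoded by a level function $\phi\colon V(G)\to\mathbb{Z}_{\ge 0}$; the support $C_\phi=\{v:\phi(v)\ge 1\}$ is always a vertex cover of $G$, and $|\sigma|=|C_\phi|$. Since $G$ is very well-covered (Lemma~\ref{matching}), unmixedness of $I(G_w)$ becomes the combinatorial statement: every minimal $\phi$ satisfies $|C_\phi|=h$, equivalently, $C_\phi$ is a minimal vertex cover of $G$.

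\textbf{Sufficiency.} Assume (i) and (ii). Given a minimal $\phi$ with $|C_\phi|>h$, the set $C_\phi$ would be a non-minimal vertex cover of $G$, so some $v\in C_\phi$ would have all its neighbors in $C_\phi$. Minimality then forces the existence of an edge $e=uv$ with $w(e)\ge \phi(v)$ and $\phi(u)>w(e)$ (otherwise the polarized variable $v_{\phi(v)}$ would be redundant). I will choose $v$ so that $\phi(v)$ is extremal (say, maximal among vertices with all neighbors in $C_\phi$) and apply (i), (ii), together with Proposition~\ref{th:unmix_ijk}(i) to produce the requisite third edges, to chase such a witness edge and derive a contradiction from a strictly decreasing chain of weight bounds.

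\textbf{Necessity and main obstacle.} For the converse, when (i) or (ii) fails I construct an explicit minimal polarized cover with $|C_\phi|=h+1$. In the model case where (i) fails with $w(x_iz_j)>w(x_iy_i)$, the candidate $\phi(x_k)=1$ for $k\ne i$, $\phi(y_i)=1$, $\phi(x_i)=w(x_iz_j)$, and $\phi=0$ elsewhere, covers $x_iy_i$ via $y_i$ and $x_iz_j$ via $x_i$, while the remaining edges are handled using the matching structure (*). The failure of (ii) is treated analogously on a three-vertex neighborhood. The principal obstacle is that this simple $\phi$ can fail validity when $x_i$ has other $Y$-neighbors $y_k$ with $w(x_iy_k)<w(x_iz_j)$: one must patch by extending the support onto those $y_k$ while preserving minimality, and verify that each polarized variable in the (possibly enlarged) support is necessary for covering some specific hyperedge. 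This verification — showing that conditions (i) and (ii) exactly encode the (non-)minimality of such $\phi$'s — is the technical core of the proof and is where Proposition~\ref{th:unmix_ijk} is used repeatedly to control which neighborhoods can appear.
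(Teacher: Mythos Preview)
Your polarization framework and the encoding of minimal primes of $J=I(G_w)^{\mathrm{pol}}$ by level functions $\phi$ is exactly the setup the paper uses, and your reduction of unmixedness to ``every minimal cover $\sigma$ has $|C_\phi|=h$'' is correct. The difficulties lie in how you execute the two directions.

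For the direction you call \emph{Sufficiency} (conditions (i),(ii) $\Rightarrow$ unmixed), your plan to pick a vertex $v$ with $\phi(v)$ extremal and ``chase witness edges along a strictly decreasing chain of weight bounds'' is too vague to be a proof, and it is not clear it terminates in a contradiction. The paper's argument is a single step, not a chase: if $|C_\phi|>h$ then by pigeonhole on the pairs $\{x_j,y_j\}$ there is an index $j$ with both $x_{jp},y_{jq}\in\mathfrak p$; minimality of $\mathfrak p$ forces witness edges $z_\ell x_j$ and $y_jx_r$ with $w(z_\ell x_j)\ge p$, $w(y_jx_r)\ge q$, and no polarized copy of $z_\ell$ (up to level $w(z_\ell x_j)$) or of $x_r$ (up to level $w(y_jx_r)$) in $\mathfrak p$. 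Proposition~\ref{th:unmix_ijk}(i) then supplies the edge $z_\ell x_r$, and condition~(ii) bounds its weight below both witness weights, so the corresponding polarized generator lies entirely outside $\mathfrak p$ --- an immediate contradiction. You should replace the undefined chase with this direct use of the matched pair $x_j,y_j$.

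For the direction you call \emph{Necessity} (unmixed $\Rightarrow$ (i),(ii)), your plan to \emph{construct} an explicit minimal cover of size $h+1$ whenever (i) or (ii) fails is workable in principle but, as you yourself note, runs into patching problems that you do not resolve. The paper avoids this entirely by arguing non-constructively: when unmixedness holds, every minimal prime of $J$ contains exactly one polarized variable with a given first index. If, say, $a:=w(x_iz_j)>b:=w(x_iy_i)$, then since $x_{ia}$ divides the polarized generator of $x_iz_j$ but $x_{i1}\cdots x_{i(a-1)}z_{j1}\cdots z_{ja}\notin J$, some minimal prime contains $x_{ia}$; that same prime must meet the polarized generator $x_{i1}\cdots x_{ib}y_{i1}\cdots y_{ib}$ of $x_iy_i$, giving a second variable with first index $i$. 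The violations of (ii) are handled the same way, by exhibiting a monomial just outside $J$ whose non-containing minimal prime is forced (via the two hypothesized edges) to pick up both an $x_{j\ast}$ and a $y_{j\ast}$. This is much shorter than constructing and verifying minimality of an explicit $\phi$, and it sidesteps the obstacle you identified.
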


\begin{proof}
Set $J:=I(G_w)^{\rm pol}$.

Suppose $I(G_w)$ is unmixed. Then $J$ is an unmixed ideal of height $h$. In particular, for every integer $i$ with $1\leq i\leq h$, any minimal prime of $J$ contains exactly one variable whose first index is $i$. We first prove condition (i). Assume that $x_iz_j \in E(G)$. Set $a:=w(x_iz_j)$ and $b:=w(x_i y_i)$. As $$x_{i1}x_{i2}\cdots x_{ia}z_{j1}z_{j2}\cdots z_{ja} \in J,$$there is a minimal prime $\mathfrak{p}_1$ of $J$ with $x_{ia}\in \mathfrak{p}_1$. By contradiction, suppose $a>b$. It follows from$$x_{i1}x_{i2}\cdots x_{ib}y_{i1}y_{i2}\cdots y_{ib}\in J$$that at least one of the variable $x_{i1}, x_{i2}, \dots, x_{ib},y_{i1}, y_{i2}, \dots, y_{ib}$ belongs to $\mathfrak{p}_1$. Therefore, $\mathfrak{p}_1$ contains two variables with first index $i$, which is a contradiction. Hence, $a\leq b$.

Now, set $c:=w(x_jy_j)$ and suppose $a>c$. As$$x_{i1}x_{i2}\cdots x_{ia}z_{j1}z_{j2}\cdots z_{ja}\in J,$$there is a minimal prime $\mathfrak{p}_2$ of $J$ with $z_{ja}\in \mathfrak{p}_2$. Also, it follows from$$x_{j1}x_{j2}\cdots x_{jc}y_{j1}y_{j2}\cdots y_{jc}\in J$$that at least one of the variable $x_{j1}, x_{j2}, \dots, x_{jc},y_{j1}, y_{j2}, \dots, y_{jc}$ belongs to $\mathfrak{p}_2$. Therefore, $\mathfrak{p}_2$ contains two variables with first index $j$, which is a contradiction. Hence, $a\leq c$.

Next, we prove condition (ii). Assume that $z_ix_j$ and $y_jx_k\in E(G)$. Since $G$ is unmixed, it follows from Proposition \ref{th:unmix_ijk} that $z_ix_k \in E(G)$ (this is trivially true, if $i=k$ and for $z_i=y_i$). Set $d:=w(z_ix_k)$, $e:=w(z_ix_j)$, $f:=w(y_jx_k)$. Suppose $d>e$. Since $w(z_ix_k)=d$, it follows that$$z_{i1}z_{i2}\cdots z_{i(d-1)}x_{k1}x_{k2}\cdots x_{kf} \notin J.$$Thus, there is a minimal prime $\mathfrak{p}_3$ of $J$ with$$z_{i1}z_{i2}\cdots z_{i(d-1)}x_{k1}x_{k2}\cdots x_{kf}\notin \mathfrak{p}_3.$$Hence, neither of the variables $z_{i1}, z_{i2}, \dots, z_{i(d-1)}, x_{k1}, x_{k2}, \dots, x_{kf}$ belongs to $\mathfrak{p}_3$. Then we deduce from$$z_{i1}z_{i2}\cdots z_{ie}x_{j1}x_{j2}\cdots x_{je}, y_{j1}y_{j2}\cdots y_{jf}x_{k1}x_{k2}\cdots x_{kf}\in J$$that $x_{js}, y_{jt}\in \mathfrak{p}_3$, for some positive integers $s$ and $t$. This is a contradiction, as no minimal prime of $J$ can contain both of $x_{js}$ and $y_{jt}$. Thus, $d\le e$.

Suppose $d>f$. Since$$z_{i1}z_{i2}\cdots z_{ie}x_{k1}x_{k2}\cdots x_{k(d-1)}\notin J,$$there is a minimal prime $\mathfrak{p}_4$ of $J$ which contains neither of the variables$$z_{i1}, z_{i2}, \dots, z_{ie}, x_{k1}, x_{k2}, \dots, x_{k(d-1)}.$$It follows from $$z_{i1}z_{i2}\cdots z_{ie}x_{j1}x_{j2}\cdots x_{je}, y_{j1}y_{j2}\cdots y_{jf}x_{k1}x_{k2}\cdots x_{kf}\in J$$that $x_{j\ell}, y_{jr}\in \mathfrak{p}_4$, for some positive integers $\ell$ and $r$. This is again a contradiction. Therefore, $d\le f$.

\vspace{.1in}

We now prove the reverse implication. Suppose conditions (i) and (ii) hold and assume by contradiction that $I(G_w)$ is not unmixed. Hence, $J$ is not an unmixed ideal. Thus, there is a minimal prime $\mathfrak{p}$ of $J$ such that $x_{jp}, y_{jq}\in \mathfrak{p}$, for some integers $j, p, q\geq 1$. As above set $c:=w(x_j y_j)$.

Assume $p>c$. Since $\mathfrak{p}$ is a minimal prime of $J$, there is $i \neq j$ and $z_i\in \{x_i, y_i\}$ such that $z_ix_j \in E(G)$ and $w(z_ix_j) \geq p$. Then by (i), we have $c \ge w(z_ix_j)\ge p>c$, which is a contradiction. Hence  $p\le c$.

Suppose $q>c$. Since $\mathfrak{p}$ is a minimal prime of $J$, there is $k \neq j$ such that $y_jx_k \in E(G)$ with $w(y_jx_k)\ge q$. Then by (i) we have $c \ge w(y_jx_k)\ge q >c$, which is a contradiction. Therefore, $q\le c$.

Since $\mathfrak{p}$ is a minimal prime of $J$, there is $\ell \neq j$ and $z_{\ell}\in \{x_{\ell}, y_{\ell}\}$ such that $z_{\ell}x_j \in E(G)$ with $\alpha:=w(z_{\ell}x_j) \ge p$ and$$z_{\ell1}, z_{\ell2}, \dots, z_{\ell\alpha} \not\in \mathfrak{p}.$$Similarly,, there is $r \ne j$ such that $y_jx_r \in E(G)$ with $\beta:=w(y_jx_r)\ge q$ and$$x_{r1}, x_{r2}, \dots, x_{r\beta} \not\in \mathfrak{p}.$$By Proposition \ref{th:unmix_ijk}, $z_{\ell}x_r\in E(G)$. Set $\gamma:=w(z_{\ell}x_r)$. It follows from condition (ii) that $\gamma\le \alpha$ and $\gamma\le \beta$. Thus,$$z_{\ell1}, z_{\ell2}, \dots, z_{\ell\gamma}, x_{r1}, x_{r2}, \dots, x_{r\gamma}\notin \mathfrak{p}.$$This contradicts$$z_{\ell1}z_{\ell2}\cdots z_{\ell\gamma}x_{r1}x_{r2}\cdots x_{r\gamma} \in J.$$Hence, $I(G_w)$ is an unmixed ideal.
\end{proof}

\begin{rem}
Let $G$ be a very well-covered graph and let $w$ be an edge weight, such that $I(G_w)$ is an unmixed ideal. Assume that the vertices of $G$ are labeled in such a way that the condition (*) is satisfied. It follows from Theorem \ref{th:unmix} that if $x_iy_j, x_jy_i\in E(G)$, then $w(x_iy_i)=w(x_jy_j)=w(x_iy_j) =w(x_jy_i)$.
\end{rem}

Our next goal is to provide a combinatorial characterization for Cohen-Macaulayness of edge-weighted edge ideal of very well-covered graphs. First we summarize the known results concerning the Cohen-Macaulay property of a (non-weighted) very well-covered graph.

\begin{lem}\label{lem:order} \cite{CRT} Let $G$ be an unmixed graph with $2h$ vertices,
which are not isolated, and assume that the vertices of $G$ are labeled such the condition (*) is satisfied.
If $G$ is a Cohen-Macaulay graph then there exists a suitable simultaneous change
of labeling on both $\{x_i\}_{i=1}^h$ and $\{y_i\}_{i=1}^h$
(i.e., we relabel  $(x_{i_1}, \ldots, x_{i_h})$ and $(y_{i_1}, \ldots,
y_{i_h})$ as  $(x_1, \ldots, x_h)$ and $(y_1, \ldots, y_h)$ at
the same time), such that $x_iy_j \in E(G)$ implies $i \leq j$.
\end{lem}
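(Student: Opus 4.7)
The plan is to encode the desired ordering as a linear extension of a binary relation on $[h]$ and to use the Cohen-Macaulay hypothesis only to rule out the single remaining obstruction, namely a directed $2$-cycle. Define a relation $\to$ on $[h]$ by declaring $i \to j$ whenever $i \ne j$ and $x_i y_j \in E(G)$. Any linear extension of the transitive closure of $\to$ produces a permutation $\pi$ of $[h]$ such that applying $\pi$ simultaneously to the sequences $(x_i)_{i=1}^h$ and $(y_i)_{i=1}^h$ yields a labeling with the property demanded by the lemma, provided we can show that $\to$ is acyclic.

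The first step is to establish transitivity of $\to$ using Proposition \ref{th:unmix_ijk}(i). Given $x_i y_j, x_j y_k \in E(G)$ with $i,j,k$ pairwise distinct, I would apply the proposition with the indices $(k, j, i)$ playing the roles of $(i, j, k)$ in its statement and with $z_k = y_k$: the two hypotheses $z_k x_j = y_k x_j \in E(G)$ (a rewriting of $x_j y_k \in E$) and $y_j x_i \in E(G)$ (a rewriting of $x_i y_j \in E$) both hold, and the conclusion $z_k x_i = y_k x_i \in E(G)$ is exactly $x_i y_k \in E(G)$, so $i \to k$. Iterating, any directed cycle $i_1 \to i_2 \to \cdots \to i_s \to i_1$ can be shortened to a directed $2$-cycle $i \to j \to i$ with $i \ne j$.

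The crucial step is to exclude such $2$-cycles. Suppose for contradiction that $i \ne j$ satisfy $x_i y_j, x_j y_i \in E(G)$. Together with the matching edges $x_i y_i, x_j y_j$, the independence of $Y$ (which forces $y_i y_j \notin E$), and Proposition \ref{th:unmix_ijk}(ii) (which forces $x_i x_j \notin E$), the induced subgraph of $G$ on $\{x_i, y_i, x_j, y_j\}$ is an induced $4$-cycle. My plan is to produce an independent set $F \subseteq V(G) \setminus \{x_i, y_i, x_j, y_j\}$ satisfying $F \cup N_G(F) = V(G) \setminus \{x_i, y_i, x_j, y_j\}$; then the link of $F$ in the independence complex $\Delta(G)$ is the independence complex of $C_4$, which is $1$-dimensional and disconnected and therefore not Cohen-Macaulay. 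Since links of faces in a Cohen-Macaulay simplicial complex are themselves Cohen-Macaulay, this contradicts the Cohen-Macaulay hypothesis on $G$.

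The main obstacle will be the construction of $F$. For each $k \ne i, j$ I must choose one of $x_k, y_k$ that is not adjacent to any vertex in $\{x_i, y_i, x_j, y_j\}$, while maintaining mutual non-adjacency among the chosen representatives and covering $V(G) \setminus \{x_i, y_i, x_j, y_j\}$. The structural implications of Proposition \ref{th:unmix_ijk}, in particular the edge-propagation exploited in the transitivity step above, should force the relevant representatives to exist; the delicate combinatorial core of the argument is to verify that a greedy (or case-by-case) selection succeeds. Once acyclicity of $\to$ has been secured, any linear extension yields the desired simultaneous relabeling of $(x_i)$ and $(y_i)$, and the lemma follows.
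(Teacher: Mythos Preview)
The paper does not supply a proof of this lemma; it simply cites \cite{CRT}. So there is no in-paper argument to compare against, and I will assess your sketch on its own merits.

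Your reduction to $2$-cycles is correct. Proposition~\ref{th:unmix_ijk}(i), applied with the index swap you describe, yields transitivity of the relation $i\to j\Leftrightarrow x_iy_j\in E(G)$, so any directed cycle collapses to a $2$-cycle, and once $2$-cycles are excluded any linear extension of $\to$ gives the desired relabeling.

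The gap is in your plan to exclude $2$-cycles. You propose, given $x_iy_j,x_jy_i\in E(G)$, to build an independent set $F\subseteq V(G)\setminus\{x_i,y_i,x_j,y_j\}$ with $F\cup N_G(F)=V(G)\setminus\{x_i,y_i,x_j,y_j\}$, so that the link of $F$ in $\Delta(G)$ is the independence complex of the induced $C_4$. Concretely this forces you, for each $k\ne i,j$, to choose one of $x_k,y_k$ having \emph{no} neighbor in $\{x_i,y_i,x_j,y_j\}$. But the very-well-covered hypotheses alone do not guarantee such a choice. Take $G=K_{3,3}$ with parts $X=\{x_1,x_2,x_3\}$ and $Y=\{y_1,y_2,y_3\}$: condition~(*) holds, $G$ is very well-covered, and $x_1y_2,x_2y_1\in E(G)$ is a $2$-cycle. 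For $k=3$, however, $y_3$ is adjacent to $x_1,x_2$ and $x_3$ is adjacent to $y_1,y_2$, so neither can enter $F$; the isolating face you want simply does not exist. Of course $K_{3,3}$ is not Cohen--Macaulay, so it does not contradict the lemma, but it shows that your construction of $F$ cannot be carried out using only the structural input (Proposition~\ref{th:unmix_ijk}) you invoke. The Cohen--Macaulay hypothesis has to enter the argument at a different point---for instance via an algebraic argument on the Stanley--Reisner ring as in \cite{CRT}, or via a vertex-decomposability/shelling statement that produces the order directly---rather than only at the very end as a link-nondegeneracy check. As written, the ``delicate combinatorial core'' you flag is not merely delicate; it is false in the generality you would need.
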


Hence, for a Cohen-Macaulay very well-covered graph $G$ satisfying the condition (*), we may assume
that

\vspace{.2in} (**) $x_iy_j \in E(G)$ implies $i \leq j$.
\vspace{.2in}

Now we recall Cohen-Macaulay criterion for very well-covered graphs. See also \cite{CHHKTT, CV} for different characterizations.

\begin{thm}\label{Crit:CM}\cite{CRT} Let $G$ be a graph with
$2h$ vertices, which are not isolated and assume that the vertices of $G$ are labeled such that the conditions (*) and (**) are satisfied.
Then the following conditions are equivalent:
\begin{enumerate}
\item $G$ is Cohen-Macaulay;
\item $G$ is unmixed;
\item The following conditions hold:
\begin{enumerate}
\item[(i)] If $z_ix_j, y_jx_k \in E(G)$, then $z_ix_k \in E(G)$ for distinct indices $i, j, k$ and
for $z_i \in \{ x_i, y_i \}$;
\item[(ii)] If $x_iy_j \in E(G)$, then $x_ix_j \notin E(G)$.
\end{enumerate}
\end{enumerate}
\end{thm}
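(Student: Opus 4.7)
The plan is to establish the chain of implications (1) $\Rightarrow$ (2) $\Rightarrow$ (3) $\Rightarrow$ (1), with the last being the substantive one. The first is immediate: Cohen-Macaulayness of $S/I(G)$ forces all associated primes to share the same height, so $I(G)$ is unmixed.

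For (2) $\Rightarrow$ (3), condition (*) already provides a minimal vertex cover $X$ of size $h$. If $G$ is unmixed then every minimal vertex cover of $G$ has size $h$, so every maximal independent set has size $h = |V(G)|/2$, and $G$ is very well-covered. Applying Proposition \ref{th:unmix_ijk} to $G$ then yields conditions (i) and (ii) directly.

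For (3) $\Rightarrow$ (1), I would argue by induction on $h$. The base case $h = 1$ is trivial since $S/I(G) = \mathbb{K}[x_1,y_1]/(x_1y_1)$. For the inductive step, condition (**) makes $y_1$ a pendant vertex attached to $x_1$: indeed, $x_iy_1 \in E(G)$ forces $i \leq 1$, while $Y$ is independent. I would then apply the short exact sequence
\[
0 \to S/(I(G):x_1) \xrightarrow{\cdot x_1} S/I(G) \to S/(I(G),x_1) \to 0,
\]
where $(I(G),x_1) = (x_1) + I(G \setminus \{x_1\})$ and $(I(G):x_1) = (N_G(x_1)) + I(G \setminus N_G[x_1])$. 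Once one verifies that both end terms are Cohen-Macaulay of dimension $h$, the depth lemma forces $S/I(G)$ to be Cohen-Macaulay as well.

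The main obstacle is the combinatorial bookkeeping showing that the relevant induced subgraphs inherit enough structure to invoke the induction hypothesis, i.e., that they still satisfy (*), (**), and conditions (i) and (ii) after a suitable relabeling. For $G \setminus \{x_1\}$ the vertex $y_1$ becomes isolated and one works with $G \setminus \{x_1, y_1\}$, keeping the pairing $(x_i, y_i)$ for $i \geq 2$ without difficulty. For $G \setminus N_G[x_1]$, however, one must also discard the matched partner $x_j$ of every $y_j \in N_G(x_1) \cap Y$ in order to preserve the matched pairing structure, and then verify that the remaining vertex set satisfies (*), (**), (i), and (ii) after relabeling; the transitivity condition (i) applied to $G$ is exactly what rules out spurious edges that would otherwise obstruct the structure, while (ii) guarantees that the discarded $x_j$ is not itself adjacent to $x_1$. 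Once these checks are completed, the short exact sequence yields Cohen-Macaulayness of $S/I(G)$ and closes the induction.
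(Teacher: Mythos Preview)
The paper states this theorem without proof, citing \cite{CRT}; there is no in-paper argument to compare your proposal against. Your outline is nonetheless correct. The implications (1)$\Rightarrow$(2) and (2)$\Rightarrow$(3) are as you say, the latter being exactly Proposition~\ref{th:unmix_ijk}. For (3)$\Rightarrow$(1), the short-exact-sequence induction on multiplication by $x_1$ goes through once the bookkeeping you flag is carried out: condition (i) with $k=1$ shows that every $x_j$ with $x_1y_j\in E(G)$ is isolated in $G\setminus N_G[x_1]$, and the same condition with $z_i=x_1$ shows that every $y_j$ with $x_1x_j\in E(G)$ is isolated there; condition (ii) guarantees these two index sets are disjoint. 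After stripping these isolated vertices, what remains is an induced subgraph on matched pairs $\{x_i,y_i\}$ that inherits (*), (**), (i), (ii), so the induction hypothesis applies. Both ends of the sequence are then Cohen--Macaulay of dimension $h$, and since the support of $S/I(G)$ is the union of the supports of the ends, $\dim S/I(G)=h$ as well; the depth inequality then gives Cohen--Macaulayness.

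For what it is worth, the original argument in \cite{CRT} does not proceed by this colon-ideal induction; it works on the Alexander-dual side and appeals to the Eagon--Reiner criterion. Your route is more elementary in that it stays on the primal side and avoids duality, at the price of the combinatorial verifications above.
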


In order to study the Cohen-Macaulay property of edge-weighted edge ideal of very well-covered graphs, we introduce an operator which allows us to construct a new weighted very well-covered graph from a given one.

Let $G_w$ be a weighted very well-covered graph with $n=2h$ vertices and assume that the vertices of $G$ are labeled such that the condition (*) is satisfied. For any $i \in [h]:= \{1, \dots, h\}$, set
\[
N_i:=\{k \in [h] : x_ky_i \in E(G)\} \setminus \{i\},
\]
and define the base graph $O_i(G)$ as follows
\[
O_i(G):=G-\{x_ky_i : k\in N_i\} + \{x_kx_i : k \in N_i\}.
\]
Now we define the weight $w'$  on $O_i(G)$
by
\[
 w'(e)=
\left\{
\begin{array}{ll}
w(x_ky_i) &  \mbox{if} \ e=x_kx_i, \  k \in N_i\\
w(e)     & \mbox{otherwise.}
\end{array}
\right.
\]
Finally, we set$$O_i(G_w):=O_i(G)_{w'}.$$

We are now ready to prove the second main result of this paper.

\begin{thm} \label{CM}
Let $G$ be a Cohen-Macaulay very well-covered graph and let $w$ be an edge weight on $G$. Then the following conditions are equivalent.
\begin{enumerate}
\item $I(G_w)$ is an unmixed ideal.
\item $S/I(G_w)$ is a Cohen-Macaulay ring.
\end{enumerate}
\end{thm}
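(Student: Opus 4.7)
The direction $(2)\Rightarrow(1)$ is standard, since a Cohen-Macaulay quotient is always unmixed. For $(1)\Rightarrow(2)$, the plan is an iterative reduction driven by the operator $O_i$ defined just above.

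Two stability properties must first be established. If $I(G_w)$ is unmixed then so is $I(O_i(G_w))$: the weight conditions of Theorem~\ref{th:unmix} are preserved by replacing each edge $x_ky_i$ (for $k\in N_i$) by $x_kx_i$ at the same weight, which follows by direct case-checking, exploiting that $c_i:=w(x_iy_i)$ is the maximum weight among edges incident to $x_i$ and to $y_i$. Moreover $O_i(G)$ remains a Cohen-Macaulay very well-covered graph: after applying Lemma~\ref{lem:order} to enforce $x_ay_b\in E(G)\Rightarrow a\le b$, one verifies the two conditions of Theorem~\ref{Crit:CM} for the modified graph. The key remaining claim is the equivalence
\[
S/I(G_w)\ \text{is Cohen-Macaulay}\iff S/I(O_i(G_w))\ \text{is Cohen-Macaulay}.
\]
Granted this, iterating $O_1,\ldots,O_h$ reduces the problem to the case where each $y_i$ has $x_i$ as its only $X$-neighbor; that is, $G$ has the form $H\cup\{x_1y_1,\ldots,x_hy_h\}$ for some graph $H$ on $X$. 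In this reduced configuration each $y_i$ appears only in the single generator $(x_iy_i)^{c_i}$, and $S/I(G_w)$ can be shown Cohen-Macaulay directly---for instance by exhibiting a shelling of the Stanley-Reisner complex of its polarization, or by induction on $h$ that sequentially eliminates the ``whisker'' variables $y_i$ via a depth argument.

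The main obstacle is the central equivalence. The polarizations $J=I(G_w)^{\rm pol}$ and $J'=I(O_i(G_w))^{\rm pol}$ live in the same polynomial ring $T$, since by condition (i) of Theorem~\ref{th:unmix} the maximum multiplicity of $x_i$ (and likewise of $y_i$) among the generators equals $c_i$. They agree on all generators except those indexed by $k\in N_i$: the polarized monomial $x_{k,1}\cdots x_{k,a_k}\,y_{i,1}\cdots y_{i,a_k}$ in $J$ is replaced by $x_{k,1}\cdots x_{k,a_k}\,x_{i,1}\cdots x_{i,a_k}$ in $J'$, where $a_k=w(x_ky_i)$. Because this modification is not a linear change of coordinates, I plan to prove CM-equivalence via Lemma~\ref{Serre}, comparing the dimensions of the modules $\Ext^{N-j}_T(T/J,T)$ and $\Ext^{N-j}_T(T/J',T)$ (where $N=\dim T$) using the combinatorial dictionary between the associated primes of $J$ and $J'$ imposed by the unmixedness conditions of Theorem~\ref{th:unmix}.
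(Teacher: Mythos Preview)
Your overall architecture---reduce via the $O_i$ operators to the whisker case---is exactly the paper's. But you have overlooked the one-line trick that makes the reduction go through, and your substitute plan (comparing $\Ext$ modules via Lemma~\ref{Serre}) is both vague and unnecessary.

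The point you are missing is this: modulo the linear form $x_k-y_k$, the ideals $I(G_w)$ and $I(O_k(G_w))$ become equal, because setting $y_k=x_k$ converts each generator $(x_\ell y_k)^{a}$ into $(x_\ell x_k)^{a}$. Hence
\[
(S/I(G_w))/(x_k-y_k)\;\cong\;(S/I(O_k(G_w)))/(x_k-y_k).
\]
Once you know (by induction on $\sum_i \deg_G y_i$, with base case the whisker graph handled in \cite[Theorem~5.7]{PaSW}) that $S/I(O_k(G_w))$ is Cohen--Macaulay, the right-hand side is Cohen--Macaulay of dimension $h-1$. If $S/I(G_w)$ were not Cohen--Macaulay, then $x_k-y_k$ would fail to be regular on it, so $x_k-y_k$ would lie in some $\mathfrak p\in\Ass(S/I(G_w))$. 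Since $I(G_w)$ is a monomial ideal, $\mathfrak p$ is generated by variables, forcing both $x_k,y_k\in\mathfrak p$; this contradicts unmixedness, which says each associated prime contains exactly one of $x_k,y_k$.

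So you do not need an ``if and only if'' at the level of polarizations, nor any $\Ext$-dimension bookkeeping: the linear change of variables $y_k\mapsto x_k$ already implements the passage from $G_w$ to $O_k(G_w)$, and unmixedness is precisely what guarantees that $x_k-y_k$ is a nonzerodivisor. Your proposed route through $J$ and $J'$ in the polarized ring, besides being unfinished, fights against the fact that the operation is \emph{not} a change of coordinates at the polarized level; the whole advantage of working in $S$ is that there it \emph{is}.
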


\begin{proof}

The implication (2) $\Longrightarrow$ (1) is well known. So, we prove (1) implies (2). As $G$ is a Cohen-Macaulay very well-covered graph, we may assume that conditions (*) and (**) are satisfied. In particular, $|V(G)|=2h$. It follows from unmixedness of $I(G_w)$ that the height of every associated prime of $S/I(G_w)$ is $h$. Thus, for every $\mathfrak{p}\in \Ass S/I(G_{w})$ and for every integer $k$ with $1\leq k\leq h$, exactly one of $x_k$ and $y_k$ belongs to $\mathfrak{p}$.

We use induction on $m:=\sum_{i=1}^{h}\deg_Gy_i$.
For $m=h$, the assertion follows from \cite[Theorem 5.7]{PaSW}. Hence, suppose $m>h$. Then there exists an integer $k$ with $1\leq k\leq h$ such that $\deg y_k\ge 2$. By contradiction, assume that $S/I(G_w)$ is not Cohen-Macaulay. Set $G'_{w'}=O_k(G_w)$. Using Theorem \ref{th:unmix}, one can easily check that $I(G'_{w'})$ is an unmixed ideal.
By induction hypotheses $S/I(G'_{w'})$ is Cohen-Macaulay. Therefore,$$(S/I(G_{w}))/(x_k-y_k)\cong (S/I(G'_{w'}))/(x_k-y_k)$$is Cohen-Macaulay.
Since $S/I(G_w)$ is not Cohen-Macaulay, $x_k-y_k$ is not regular on $S/I(G_{w})$. Hence,$$x_k-y_k\in \bigcup_{\mathfrak{p}\in \Ass S/I(G_{w})} \mathfrak{p}.$$
Thus, there exists an associated prime ideal $\mathfrak{p}$ of $S/I(G_{w})$ such that $x_k-y_k \in \mathfrak{p}$. Consequently, $x_k, y_k \in \mathfrak{p}$. This is a contradiction and proves that $S/I(G_w)$ is Cohen-Macaulay.
\end{proof}

It is well known (and easy to prove) that every unmixed bipartite graph is very well-covered. Hence, as an immediate consequence of Theorem \ref{CM}, we obtain the following corollary.

\begin{cor}
Let $G$ be a Cohen-Macaulay bipartite graph and let $w$ be an edge weight on $G$. Then the following conditions are equivalent.
\begin{enumerate}
\item $I(G_w)$ is an unmixed ideal.
\item $S/I(G_w)$ is a Cohen-Macaulay ring.
\end{enumerate}
\end{cor}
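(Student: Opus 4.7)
The plan is to obtain the corollary as a one-step reduction to Theorem \ref{CM}; essentially nothing needs to be proved beyond recalling the classical bridge between bipartite unmixed graphs and very well-covered graphs.

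First I would note that $G$ Cohen-Macaulay implies $S/I(G)$ is Cohen-Macaulay, so $I(G)$ is unmixed, which is equivalent to saying that $G$ is an unmixed graph. Since the paper's standing convention is that graphs have no isolated vertices, $G$ is an unmixed bipartite graph without isolated vertices.

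Next I would invoke the classical fact (stated in the sentence immediately preceding the corollary, and due originally to Villarreal) that every unmixed bipartite graph without isolated vertices is very well-covered: its two color classes have the same size and together form a minimal vertex cover and a maximal independent set of equal cardinality $|V(G)|/2$. Hence $G$ is a Cohen-Macaulay very well-covered graph and therefore falls within the hypothesis of Theorem \ref{CM}.

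Finally I would simply apply Theorem \ref{CM} to $G_w$, which yields the equivalence of unmixedness of $I(G_w)$ with Cohen-Macaulayness of $S/I(G_w)$. There is no genuine obstacle here; the only thing to be careful about is the no-isolated-vertex convention, which is already built into the paper's definition of a graph and is what makes the bipartite-to-very-well-covered step valid.
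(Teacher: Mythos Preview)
Your proposal is correct and follows exactly the paper's own route: the paper states that every unmixed bipartite graph (without isolated vertices) is very well-covered and then obtains the corollary as an immediate consequence of Theorem~\ref{CM}. Your only addition is making explicit the trivial step that Cohen--Macaulayness of $G$ forces $I(G)$ to be unmixed, which is fine.
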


%%%%%%%%%%%%%%%%%%%%%%%%%%%%%%%%%%%%%%%%%%%%%%%%%%%%%%%%%%%%%%%%%%%%%%%%%%

\section{Examples} \label{sec4}

Let $\mathcal{D}$ be a vertex-weighted oriented graph and let $G$ be its underlying graph. As we mentioned in Section \ref{sec1}, Pitones, Reyes and Toledo conjectured that $S/I(\mathcal{D})$ is Cohen-Macaulay, if $I(\mathcal{D})$ is unmixed and $S/I(G)$ is Cohen-Macaulay (see Conjecture \ref{ConjHLMRV}). The following example shows that the assertion of Conjecture \ref{ConjHLMRV} is not true.

\begin{exmp} \label{exmp1}
Let $\mathbb{K}$ be a field with ${\rm char}(\mathbb{K})=0$ and let $\mathcal{D}$ be the oriented graph with vertex set $V(\mathcal{D})=\{x_1,\ldots, x_{11}\}$ and edge set
\begin{equation*}
\begin{split}
E(\mathcal{D})=&\big\{(x_1,x_3), (x_1,x_4), (x_7,x_1), (x_1,x_{10}), (x_1,x_{11}), (x_2,x_4), (x_2,x_5),\\
        &\, \, \,  (x_2,x_8), (x_2,x_{10}), (x_2,x_{11}), (x_3,x_5), (x_3,x_6), (x_3,x_8), (x_3,x_{11}),\\
        &\, \, \,  (x_4,x_6), (x_4,x_9), (x_4,x_{11}), (x_7,x_5), (x_5,x_9), (x_{11},x_5), (x_6,x_8),\\
        &\, \, \,  (x_6,x_9), (x_9,x_7), (x_7,x_{10}), (x_8,x_{10})\big\}.
\end{split}
\end{equation*}
Consider the weight functions
\[
 w_1(x_i)=
\left\{
\begin{array}{ll}
1 &  \mbox{if} \ i\neq 11\\
2 &  \mbox{if} \ i=11,
\end{array}
\right.
\]
and
\[
 w_2(x_i)=
\left\{
\begin{array}{ll}
1 &  \mbox{if} \ i\neq 7\\
2 &  \mbox{if} \ i=7.
\end{array}
\right.
\]
For $i=1,2$, let $\mathcal{D}_i$ be the vertex-weighted oriented graph obtained from $\mathcal{D}$ by considering the weight function $w_i$. Then
\begin{equation*}
\begin{split}
I(\mathcal{D}_1)=&(x_1x_3, x_1x_4, x_1x_7, x_1x_{10}, x_1x_{11}^2, x_2x_4, x_2x_5, x_2x_8, x_2x_{10}, x_2x_{11}^2,  \\
        &\, \, \, x_3x_5, x_3x_6, x_3x_8, x_3x_{11}^2,  x_4x_6, x_4x_9, x_4x_{11}^2, x_5x_7, x_5x_9, x_5x_{11}, \\
        &\, \, \,  x_6x_8, x_6x_9, x_7x_9, x_7x_{10}, x_8x_{10}),
\end{split}
\end{equation*}
and
\begin{equation*}
\begin{split}
I(\mathcal{D}_2)=&(x_1x_3, x_1x_4, x_1x_7, x_1x_{10}, x_1x_{11}, x_2x_4, x_2x_5, x_2x_8, x_2x_{10}, x_2x_{11},  \\
        &\, \, \, x_3x_5, x_3x_6, x_3x_8, x_3x_{11},  x_4x_6, x_4x_9, x_4x_{11}, x_5x_7, x_5x_9, x_5x_{11}, \\
        &\, \, \,  x_6x_8, x_6x_9, x_7^2x_9, x_7x_{10}, x_8x_{10}).
\end{split}
\end{equation*}
Let $G$ be the underlying graph of $\mathcal{D}$. The edge ideal $I(G)$ of $G$ comes from the triangulation of the real projective plane (see for example
\cite[Exercise 6.3.65]{Vi1}). It is known that $S/I(G)$ is Cohen-Macaulay. However, for $i=1,2$, \textit{Macaulay2} computation shows that $I(\mathcal{D}_i)$ is unmixed but not Cohen-Macaulay, disproving Conjecture \ref{ConjHLMRV}.
We show that $S/I(\mathcal{D}_1)$ satisfies the Serre's condition $(S_2)$ condition, while $S/I(\mathcal{D}_2)$ does not.
Using \textit{Macaulay2} we know that $\depth  S/I(\mathcal{D}_i)=2$ for $i=1,2$. Since for $i=1,2$, $\dim  S/I(\mathcal{D}_i)=3$,
the quotient ring $S/I(\mathcal{D}_i)$ satisfies $(S_2)$ condition if and only if$$\dim \Ext_S ^{9}(S/I(\mathcal{D}_i), S) =
\dim \Ext_S ^{11-2}(S/I(\mathcal{D}_i), S) \le 2-2=0,$$by Lemma \ref{Serre}. With  \textit{Macaulay2}, one can check that
$\dim \Ext_S ^{9}(S/I(\mathcal{D}_1), S) =0$ and $\dim \Ext_S ^{9}(S/I(\mathcal{D}_2), S) =1$.
\end{exmp}

The following example provide counterexamples for the edge-weighted version of Conjecture \ref{ConjHLMRV}.

\begin{exmp}
Let $\mathbb{K}$ be a field with ${\rm char}(\mathbb{K})=0$ and let $G$ be the same graph as in Example \ref{exmp1}. Consider the following weighted edge ideals.
\begin{equation*}
\begin{split}
I(G_{w_1})=&(x_1x_3, x_1x_4, x_1x_7, x_1x_{10}, x_1x_{11}, x_2x_4, x_2x_5, x_2x_8, x_2x_{10}, x_2x_{11},  \\
        &\, \, \, x_3x_5, x_3x_6, x_3x_8, x_3x_{11},  x_4x_6, x_4x_9, x_4x_{11}, x_5x_7, x_5x_9, x_5x_{11}, \\
        &\, \, \,  x_6x_8, x_6x_9, x_7x_9, x_7x_{10}, x_8^2x_{10}^2).
\end{split}
\end{equation*}
\begin{equation*}
\begin{split}
I(G_{w_2})=&(x_1^2x_3^2, x_1^2x_4^2, x_1^2x_7^2, x_1^2x_{10}^2, x_1^2x_{11}^2, x_2^2x_4^2, x_2^2x_5^2, x_2^2x_8^2, x_2^2x_{10}^2, x_2^2x_{11}^2,  \\
        &\, \, \, x_3^2x_5^2, x_3^2x_6^2, x_3^2x_8^2, x_3^2x_{11}^2,  x_4^2x_6^2, x_4^2x_9^2, x_4^2x_{11}^2, x_5^2x_7^2, x_5^2x_9^2, x_5^2x_{11}^2, \\
        &\, \, \,  x_6^2x_8^2, x_6^2x_9^2, x_7^2x_9^2, x_7^2x_{10}^2, x_8x_{10}).
\end{split}
\end{equation*}
Then $S/I(G)$ is Cohen-Macaulay. However, \textit{Macaulay2} computation shows that $I(G_{w_1})$ is unmixed, but $S/I(G_{w_1})$ does not satisfy the Serre's condition $(S_2)$. On the other hand, \textit{Macaulay2} computation shows that $I(G_{w_2})$ is unmixed and $S/I(G_{w_2})$ satisfies the Serre's condition $(S_2)$ condition, but it is not Cohen-Macaulay.
\end{exmp}
%%%%%%%%%%%%%%%%%%%%%%%%%%%%%%%%%%%%%%%%%%%%%%%%%%%%%%%%%%%%%%%%%%%%%%%%%%

\section*{Acknowledgment}
This work was partially supported by JSPS Grant-in Aid for Scientific Research (C) 18K03244.

%%%%%%%%%%%%%%%%%%%%%%%%%%%%%%%%%%%%%%%%%%%%%%%%%%%%%%%%%%%%%%%%%%%%%%%%%%

%%%%%%%%%%%%%%%%%%%%%%%%%%%%%%%%%%%%%%%%%%%%%%%%%%%%%%%%%%%%%%%%%%%%%%%%%%

\end{document}